\newcommand{\lr}{\longrightarrow}
\newcommand{\la}{\ll}
\newcommand{\ra}{\gg}
\theoremstyle{plain}
\newtheorem{theorem}{Theorem}[section]
\newtheorem*{theorem*}{Theorem}
\newtheorem{lemma}[theorem]{Lemma}
\newtheorem{proposition}[theorem]{Proposition}
\newtheorem{example}[theorem]{Example}
\newtheorem*{mt*}{Main Theorem}
\newcommand\C{{\mathbb C}}
\renewcommand\phi{{\varphi}}
\newcommand\N{{\mathbb N}}
\newcommand\R{{\mathbb R}}
\newcommand{\del}{\partial}
\newcommand{\delbar}{\overline{\del}}
\newcommand\SL{{\hbox{\em SL}}}
\renewcommand\sl{\mathfrak{sl}}
\newcommand\g{\mathfrak{g}}
\newcommand{\Cinf}{$\mathcal{C}^\infty$}
\DeclareMathOperator{\supp}{supp}
\let\sup\undefined
\DeclareMathOperator*{\sup}{sup\vphantom{p}}
\DeclareMathOperator{\vol}{Vol}
\DeclareMathOperator{\real}{Re}
\DeclareMathOperator{\im}{im}
\let\span\undefined
\DeclareMathOperator{\span}{Span}
\title{Aeppli Cohomology and Gauduchon Metrics}
\author{Riccardo Piovani}
\address{Dipartimento di Matematica\\
Universit\`{a} di Pisa\\
Largo Bruno Pontecorvo 5 \\
56127 Pisa, Italy}
\email{piovani@mail.dm.unipi.it}
\author{Adriano Tomassini}
\address{Dipartimento di Scienze Matematiche, Fisiche e Informatiche\\
Unit\`{a} di Matematica e Informatica\\
Universit\`{a} degli Studi di Parma\\
Parco Area delle Scienze 53/A \\
43124 Parma, Italy}
\email{adriano.tomassini@unipr.it}
\keywords{Aeppli cohomology; Bott-Chern cohomology; Gauduchon metric}
\thanks{This work was partially supported by the Project PRIN ``Varietà reali e complesse: geometria, topologia e analisi armonica'' 
and by GNSAGA of INdAM}
\subjclass[2010]{53C55, 32Q15}
\begin{document}
\maketitle

\begin{abstract}
Let $(M,J,g,\omega)$ be a complete Hermitian manifold of complex dimension $n\ge2$. Let $1\le p\le n-1$ and assume that $\omega^{n-p}$ is $(\partial+\overline{\partial})$-bounded. We prove that, if 
$\psi$ is an $L^2$ and $d$-closed $(p,0)$-form on $M$, then $\psi=0$. In particular, if $M$ is compact, we derive that if the Aeppli class of $\omega^{n-p}$ vanishes, then $H^{p,0}_{BC}(M)=0$. As a special case, if $M$ admits a Gauduchon metric $\omega$ such that the Aeppli class of $\omega^{n-1}$ vanishes, then $H^{1,0}_{BC}(M)=0$.
\end{abstract}

\section{Introduction}
Let $M$ be a compact smooth manifold of even dimension $2n$. In this note, we will consider only manifolds without boundary. A simple obstruction in order that $M$ carries a K\"ahler metric is that $H^{2k}_{dR}(M;\R)\ne0$ for every $1\le k\le n$. Indeed, if $(J,g,\omega)$ is a K\"ahler stucture on $M$, then it is well known that $\omega^k$ gives rise to a non-zero de Rham cohomology class on $M$, for every $1\le k\le n$. Furthermore, Hodge decomposition theorem on compact K\"ahler manifolds guarantees that complex de Rham cohomology decompose as the direct sum of Dolbeault cohomology groups and that these coincide with 
$(p,q)$-{\em Bott-Chern} cohomology and $(p,q)$-{\em Aeppli cohomology} groups, defined respectively as
\begin{equation*}
H^{*,*}_{BC}(M)=\frac{\ker\del\cap\ker\delbar}{\im\del\delbar},\ \ \ 
H^{*,*}_{A}(M)=\frac{\ker\del\delbar}{\im\del+\im\delbar}.
\end{equation*}

For a compact non K\"ahler manifold, things are more complicated. For example, there are compact complex manifolds endowed with balanced metrics, namely Hermitian metrics $g$ whose fundamental form $\omega$ satisfies $d\omega^{n-1}=0$, such that $0=[\omega^{n-1}]_{dR}\in H^{2n-2}_{dR}(M;\C)$ (see, e.g., \cite{OUV} or Example \ref{ex-sl2c}). Moreover, Hodge decomposition does not hold; therefore Bott-Chern and Aeppli cohomology groups are natural objects to study. A remarkable result by Gauduchon, \cite[Theorem 1]{Ga}, states that if $(M,J)$ is a compact almost complex manifold of real dimension $2n>2$, given any Hermitian metric $g$, with fundamental form $\omega$, there exists a unique Hermitian metric $\tilde{g}$, conformally equivalent to $g$, whose fundamental form $\tilde{\omega}$ satisfies $dd^c\tilde{\omega}^{n-1}=0$, where $d^c=J^{-1}dJ$. In particular, if $J$ is integrable, then $\tilde{\omega}$ satisfies $\del\delbar\tilde{\omega}^{n-1}=0$. Hence $\tilde{\omega}^{n-1}$ gives rise to a cohomology class in $H^{n-1,n-1}_A(M)$. A Hermitian metric $g$ on a compact complex $n$-dimensional manifold, whose fundamental form $\omega$ satisfies $\del\delbar\omega^{n-1}=0$, is said to be a {\em Gauduchon metric}.

In this note, given a Gauduchon metric $g$, with fundamental form $\omega$, on a compact complex manifold $M$ of complex dimension $n$, we are interested in studying whether the Aeppli cohomology class $[\omega^{n-1}]_{A}\in H^{n-1,n-1}_A(M)$ vanishes.
If $n=2$, i.e., on compact complex surfaces, an application of the Stokes Theorem shows that any Gauduchon metric gives rise to a non-zero class in Aeppli cohomology, i.e., $0\ne[\omega]_{A}\in H^{1,1}_A(M)$ (see, e.g., \cite[Prop. 37]{HL}).
Very recently Yau, Zhao and Zheng prove that if $(M,J,g,\omega)$ is a compact SKL (Strominger K\"ahler-like) non-K\"ahler manifold, then $\omega$ gives rise to a non-zero class in Aeppli cohomology, i.e., $0\ne[\omega]_{A}\in H^{1,1}_A(M)$ (see \cite[Thm. 1]{YZZ}).
In this paper, we prove the following result (see Theorem \ref{cor-cpt}).
\begin{theorem} \label{main-cpt}
Let $(M,J,g,\omega)$ be a compact Hermitian manifold of complex dimension $n\ge3$.  Let $1\le p\le n-1$. If $0=[\omega^{n-p}]_{A}\in H^{n-p,n-p}_A(M)$, then $H^{p,0}_{BC}(M)=0$.
\end{theorem}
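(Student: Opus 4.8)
The idea is to derive the compact statement from the $L^2$ vanishing theorem announced in the abstract, so first I would establish that theorem in the compact case (where the manifold is automatically complete and all $L^2$ conditions are vacuous). Suppose $\psi \in H^{p,0}_{BC}(M)$, so $\psi$ is a $(p,0)$-form with $\delbar\psi = 0$ and $\del\psi = 0$ — hence $d\psi = 0$. The hypothesis $0 = [\omega^{n-p}]_A$ means $\omega^{n-p} = \del\alpha + \delbar\beta$ for some forms $\alpha$ of bidegree $(n-p-1,n-p)$ and $\beta$ of bidegree $(n-p,n-p-1)$; since $\omega^{n-p}$ is real we may take $\beta = \overline{\alpha}$, so $\omega^{n-p} = \del\alpha + \delbar\overline{\alpha}$ is $(\del+\delbar)$-exact. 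The plan is to pair $\psi$ against itself using this primitive: consider the $(n,n-1)$-form (or rather the appropriate $(n,n)$-current after one more differentiation) built from $\psi$, $\overline{\psi}$ and $\omega^{n-p}$, and integrate by parts.

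Concretely, I would look at the quantity $\int_M \psi \wedge \overline{\psi} \wedge \omega^{n-p}$ (note $\psi\wedge\overline\psi$ has bidegree $(p,p)$, so wedging with $\omega^{n-p}$ of bidegree $(n-p,n-p)$ gives a top form). Substituting $\omega^{n-p} = \del\alpha + \delbar\overline{\alpha}$ and using $\del\psi = \delbar\psi = 0$ (so also $\del\overline\psi = \overline{\delbar\psi} = 0$, etc.), Stokes' theorem should kill the integral: for instance $\psi\wedge\overline\psi\wedge\del\alpha = \pm\,\del(\psi\wedge\overline\psi\wedge\alpha)$ up to signs because $\del$ annihilates $\psi$ and $\overline\psi$ has pure bidegree $(0,p)$ with $\del\overline\psi=0$. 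Thus $\int_M \psi\wedge\overline\psi\wedge\omega^{n-p} = 0$. On the other hand, a pointwise linear-algebra computation shows that for a $(p,0)$-form $\psi$ one has $\psi\wedge\overline\psi\wedge\omega^{n-p} = c_{n,p}\,|\psi|^2\,\omega^n$ with a strictly positive constant $c_{n,p}$ (this is the standard positivity of $i^{p^2}\psi\wedge\overline\psi$ paired with powers of $\omega$, after inserting the correct power of $i$ and sign into the definition of the wedge; I would normalize $\psi$ by absorbing $i^{p^2/\cdots}$ as needed). Combining, $\int_M |\psi|^2\,\omega^n = 0$, forcing $\psi \equiv 0$.

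The main obstacle is bookkeeping rather than conceptual: getting the bidegree types of $\alpha$ and $\overline\alpha$ right so that the Stokes integration genuinely produces exact forms (one must check that $\psi\wedge\overline\psi\wedge\alpha$ and $\psi\wedge\overline\psi\wedge\overline\alpha$ are honest smooth forms of the correct degree and that the terms not directly of the form $d(\cdots)$ vanish because of $d\psi=0$), and verifying the positivity constant $c_{n,p}>0$ with the correct choice of normalizing power of $i$. Once those two points are settled the compactness is used only to apply Stokes without boundary terms; for the general complete, $L^2$ statement one would instead need a cutoff-function argument exploiting the $(\del+\delbar)$-boundedness of $\omega^{n-p}$ to control the error terms, but that refinement is not needed for Theorem \ref{main-cpt}. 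Finally, the Gauduchon corollary is immediate: a Gauduchon metric satisfies $\del\delbar\omega^{n-1}=0$ so $\omega^{n-1}$ defines an Aeppli class, and if that class vanishes we apply the theorem with $p=1$.
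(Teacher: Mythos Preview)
Your proposal is correct and follows essentially the same approach as the paper. The paper packages the argument via the Hodge-star identity $*(\omega^{n-p}\wedge\psi)=c_{n,p}\,\overline{\psi}$ and the resulting vanishing of $\del^*(\omega^{n-p}\wedge\psi)$ and $\delbar^*(\omega^{n-p}\wedge\psi)$, then integrates by parts to get $\|\omega^{n-p}\wedge\psi\|_{L^2}=0$ and invokes Lefschetz injectivity; your direct computation of $\int_M \psi\wedge\overline{\psi}\wedge\omega^{n-p}$ by Stokes together with the positivity of $i^{p^2}\psi\wedge\overline{\psi}$ is an equivalent reformulation of exactly the same two ingredients.
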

As a consequence, we obtain the following (see Theorem \ref{gau}).
\begin{theorem}
Let $M$ be a compact complex manifold of complex dimension $n\ge3$. If $M$ carries a Gauduchon metric $\omega$ such that $0=[\omega^{n-1}]_{A}\in H^{n-1,n-1}_A(M)$, then $H^{1,0}_{BC}(M)=0$.
\end{theorem}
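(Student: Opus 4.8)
The plan is to deduce the statement directly from Theorem~\ref{main-cpt} (proved below as Theorem~\ref{cor-cpt}) with $p=1$. First I would note that the hypotheses match: a Gauduchon metric $\omega$ on a compact complex $n$-manifold satisfies $\del\delbar\,\omega^{n-1}=0$ by definition, so $\omega^{n-1}$ represents a well-defined class $[\omega^{n-1}]_A\in H^{n-1,n-1}_A(M)$, and the assumption is exactly that this class vanishes. Since $n\ge3$ the index $p=1$ obeys $1\le p\le n-1$, and $n-p=n-1$, so Theorem~\ref{main-cpt} applies and yields $H^{1,0}_{BC}(M)=0$. (For $n=2$ there would be nothing to prove, since by the Stokes argument recalled in the Introduction the class of any Gauduchon form is nonzero.) At this level there is no obstacle; the content is entirely in Theorem~\ref{main-cpt}.

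For completeness let me indicate how I would prove Theorem~\ref{main-cpt} itself, since that is where the work is. I would first observe that $H^{p,0}_{BC}(M)$ is simply the space of $d$-closed $(p,0)$-forms: $\del\delbar$ raises both bidegrees, so its image meets bidegree $(p,0)$ only in $0$, and a $(p,0)$-form $\psi$ is $d$-closed iff $\del\psi=0$ and $\delbar\psi=0$. Let $\psi$ be such a form. Write the vanishing of the Aeppli class as $\omega^{n-p}=\del\alpha+\delbar\beta$, with $\alpha$ of bidegree $(n-p-1,n-p)$ and $\beta$ of bidegree $(n-p,n-p-1)$. The key point is that the top-degree form $i^{p^2}\,\psi\wedge\overline{\psi}\wedge\omega^{n-p}$ is pointwise a nonnegative multiple of the volume form, strictly positive wherever $\psi\ne0$ (only the ``diagonal'' terms of $\psi\wedge\overline{\psi}$ survive wedging with $\omega^{n-p}$). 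Now $\del\psi=\delbar\psi=0$, and conjugating gives $\del\overline{\psi}=\delbar\overline{\psi}=0$; since $\psi\wedge\overline{\psi}$ has even degree this forces $\del(\psi\wedge\overline{\psi})=\delbar(\psi\wedge\overline{\psi})=0$, so
\[
\psi\wedge\overline{\psi}\wedge\del\alpha=\del\bigl(\psi\wedge\overline{\psi}\wedge\alpha\bigr),\qquad
\psi\wedge\overline{\psi}\wedge\delbar\beta=\delbar\bigl(\psi\wedge\overline{\psi}\wedge\beta\bigr).
\]
Both right-hand sides are $\del$- respectively $\delbar$-exact forms of top degree on the compact manifold $M$, hence integrate to zero by Stokes' theorem. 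Therefore $\int_M i^{p^2}\,\psi\wedge\overline{\psi}\wedge\omega^{n-p}=0$, and by positivity $\psi\equiv0$.

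Finally I would note that this compact statement is the easy specialization of the $L^2$ vanishing theorem announced in the abstract, and that the genuine difficulty lies there: on a merely complete, noncompact manifold one cannot integrate by parts for free, and the boundary terms above must be controlled using an exhaustion/cut-off function together with the boundedness of a $(\del+\delbar)$-primitive of $\omega^{n-p}$ and the $L^2$ hypothesis on $\psi$. For the present theorem this machinery is not needed: $M$ is compact and the Gauduchon condition furnishes the required $\del\delbar$-closed representative, so the two displayed Stokes identities close the argument.
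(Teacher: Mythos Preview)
Your proposal is correct. The deduction of the stated theorem from Theorem~\ref{cor-cpt} with $p=1$ is exactly what the paper does (it is recorded there simply as ``an immediate consequence'').

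For the supporting proof of Theorem~\ref{cor-cpt} your argument is a close cousin of the paper's but packaged a bit differently. The paper first computes $*(\omega^{n-p}\wedge\psi)=c_{n,p}\overline{\psi}$ (Lemma~\ref{star}), deduces $\del^*(\omega^{n-p}\wedge\psi)=\delbar^*(\omega^{n-p}\wedge\psi)=0$ (Lemma~\ref{delstar}), and then integrates by parts to conclude $\lVert\omega^{n-p}\wedge\psi\rVert_{L^2}=0$, finishing with the Lefschetz isomorphism. You bypass the Hodge star and Lefschetz by working directly with the nonnegative top form $i^{p^2}\psi\wedge\overline{\psi}\wedge\omega^{n-p}$ and applying Stokes to $\psi\wedge\overline{\psi}\wedge(\del\alpha+\delbar\beta)$. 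These are the same computation in disguise: via Lemma~\ref{star} one has, up to a nonzero constant, $\lVert\omega^{n-p}\wedge\psi\rVert_{L^2}^2=\int_M i^{p^2}\psi\wedge\overline{\psi}\wedge\omega^{n-p}$. Your version is slightly more self-contained (no explicit $*$-formula, no appeal to Lefschetz), while the paper's formulation is tailored to the noncompact $L^2$ setting where the $\del^*$/$\delbar^*$ identities feed cleanly into the cut-off argument.
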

As an application of Theorem \ref{main-cpt}, following \cite{OUV}, we describe the example of $M=\Gamma\backslash G$, where $G$ is the complex Lie group $\SL(2,\C)$, and $\Gamma$ is a discrete cocompact subgroup (see Example \ref{ex-sl2c}). On $M$ we consider an invariant Hermitian metric $\omega$ such that  $\omega^2$ is exact. Therefore $0=[\omega^{2}]_{A}\in H^{2,2}_A(M)$ and, consequently, Theorem \ref{cor-cpt} applies, giving $H^{1,0}_{BC}(M)=0$.

Theorem \ref{main-cpt} derives from a more general result on complete Hermitian manifolds. Indeed, inspired by a vanishing theorem by Gromov, \cite[Theorem 1.2.B]{G}, we study the problem presented above in a more general setting, namely on complete Hermitian manifolds. In this context, Gromov \cite{G} got 
a vanishing result for $L^2$-Dolbeault cohomology, that is, he proved that if $(M,J,g,\omega)$ is a complete K\"ahler manifold of complex dimension $n$ such that $\omega$ is $d$-bounded, i.e., $\omega=d\eta$, with $\eta$ bounded, then $H^{p,q}_{dR}(M)=0$ for $p+q\ne n$.
In his proof, he made use of the $L^2$-Hodge decomposition theorem, and showed the vanishing of $L^2$ de Rham harmonic forms. In \cite{PT1} and \cite{PT2} (see also \cite{HT} for the almost K\"ahler setting), the authors of the present note extend Gromov's result for $W^{1,2}$ Bott-Chern harmonic forms, giving a characterization of $W^{1,2}$ Bott-Chern harmonic forms on Stein $d$-bounded manifolds, respectively on complete Hermitian manifolds. Along the same line, we prove the following result (see Theorem \ref{vanishing}), which implies Theorem \ref{main-cpt}.
\begin{theorem}
\label{main}
Let $(M,J,g,\omega)$ be a complete Hermitian manifold of complex dimension $n\ge2$. Let $1\le p\le n-1$ and assume that 
$\omega^{n-p}$ is $(\partial+\overline{\partial})$-bounded.
Let $\psi$ be a $(p,0)$-form on $M$ such that
$$
\psi\in L^2(M),\qquad \partial\psi=0,\qquad \overline{\partial}\psi=0.
$$
Then $\psi=0$.
\end{theorem}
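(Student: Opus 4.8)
\emph{Proof idea.} The plan is to run the Gromov--Andreotti--Vesentini scheme: rewrite $\|\psi\|_{L^2}^2$ as the integral over $M$ of a globally exact $(n,n)$-form, and then show this integral vanishes by a Stokes argument with cut-off functions, using completeness of $g$ together with the boundedness of a primitive of $\omega^{n-p}$. The starting point is the pointwise algebraic identity: for a $(p,0)$-form $\psi$ there is a constant $c_{n,p}\in\C\setminus\{0\}$, depending only on $n$, $p$ and the normalization of $\omega$, such that
\[
c_{n,p}\,\psi\wedge\overline{\psi}\wedge\omega^{n-p}=|\psi|_g^2\,dV_g
\]
at every point of $M$; this is the usual rewriting of the Hodge-star formula $*\,\overline{\psi}=c_{n,p}\,\overline{\psi}\wedge\omega^{n-p}$ (valid because $\psi$ has no anti-holomorphic part) through $|\psi|_g^2\,dV_g=\psi\wedge*\overline{\psi}$. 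Integrating gives $\|\psi\|_{L^2(M)}^2=c_{n,p}\int_M\psi\wedge\overline{\psi}\wedge\omega^{n-p}$.

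Next I would observe that $\psi\wedge\overline{\psi}$ is $d$-closed. Indeed $\overline{\partial}\psi=0$ forces $\psi$ to be holomorphic, hence smooth, and also $\partial\overline{\psi}=\overline{\overline{\partial}\psi}=0$; likewise $\partial\psi=0$ gives $\overline{\partial}\,\overline{\psi}=0$; since $\psi\wedge\overline{\psi}$ has even degree, $d(\psi\wedge\overline{\psi})=\partial\psi\wedge\overline{\psi}+\psi\wedge\partial\overline{\psi}+\overline{\partial}\psi\wedge\overline{\psi}+\psi\wedge\overline{\partial}\,\overline{\psi}=0$. Now use the hypothesis: write $\omega^{n-p}=\partial\alpha+\overline{\partial}\beta$ with $\alpha,\beta$ smooth and bounded, of bidegrees $(n-p-1,n-p)$ and $(n-p,n-p-1)$, and put $\gamma:=\alpha+\beta$. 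The components of $d\gamma$ other than $\partial\alpha+\overline{\partial}\beta$ have bidegrees $(n-p-1,n-p+1)$ and $(n-p+1,n-p-1)$, so wedging them against the $(p,p)$-form $\psi\wedge\overline{\psi}$ produces $(n-1,n+1)$- and $(n+1,n-1)$-forms, hence $0$; therefore, using $d(\psi\wedge\overline{\psi})=0$,
\[
d\bigl(\psi\wedge\overline{\psi}\wedge\gamma\bigr)=\psi\wedge\overline{\psi}\wedge d\gamma=\psi\wedge\overline{\psi}\wedge\omega^{n-p}.
\]
Thus $\|\psi\|_{L^2}^2=c_{n,p}\int_M d(\psi\wedge\overline{\psi}\wedge\gamma)$, which would be $0$ by Stokes if $M$ were compact.

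For the noncompact case, fix $x_0\in M$ and, by completeness, choose smooth compactly supported $\chi_R$ with $\chi_R\equiv1$ on the ball $B(x_0,R)$, $\supp\chi_R\subset B(x_0,2R)$, $0\le\chi_R\le1$, and $|d\chi_R|\le C_0/R$ with $C_0$ independent of $R$. Applying Stokes to $\chi_R\,\psi\wedge\overline{\psi}\wedge\gamma$ yields
\[
\int_M\chi_R\,\psi\wedge\overline{\psi}\wedge\omega^{n-p}=-\int_M d\chi_R\wedge\psi\wedge\overline{\psi}\wedge\gamma .
\]
As $R\to\infty$ the left-hand side converges to $\int_M\psi\wedge\overline{\psi}\wedge\omega^{n-p}=c_{n,p}^{-1}\|\psi\|_{L^2}^2$ by dominated convergence, the integrand being (up to $c_{n,p}$) the $L^1$ density $|\psi|_g^2\,dV_g$; while the right-hand side is bounded in absolute value by $C\,\|\gamma\|_{L^\infty}\,(C_0/R)\int_{B(x_0,2R)\setminus B(x_0,R)}|\psi|_g^2\,dV_g\le (C\,C_0\,\|\gamma\|_{L^\infty}/R)\,\|\psi\|_{L^2(M)}^2$, which tends to $0$ since $\psi\in L^2(M)$. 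Hence $\|\psi\|_{L^2}^2=0$, i.e.\ $\psi=0$.

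The only genuinely delicate step is the last one — making the boundary term vanish — and it succeeds precisely because three hypotheses line up: completeness supplies cut-offs with $O(1/R)$ gradient, the assumption that $\omega^{n-p}$ is $(\partial+\overline{\partial})$-bounded supplies a primitive $\gamma$ with $\|\gamma\|_{L^\infty}<\infty$, and $\psi$ is globally square-integrable; the algebraic identity for $|\psi|_g^2$ and the $d$-closedness of $\psi\wedge\overline{\psi}$ are routine. (If the primitive is only known to be bounded and not smooth, a standard mollification on coordinate balls reduces to the case treated above.)
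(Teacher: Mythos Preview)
Your proof is correct and follows essentially the same strategy as the paper's: both hinge on the pointwise Lefschetz/Hodge-star identity relating $|\psi|^2\,\vol$ to $\psi\wedge\overline{\psi}\wedge\omega^{n-p}$, the $d$-closedness of $\psi$, and a cut-off Stokes argument exploiting the bounded primitive of $\omega^{n-p}$. The packaging differs slightly---the paper phrases the integration by parts via $\partial^*,\overline{\partial}^*$ acting on $\omega^{n-p}\wedge\psi$ and concludes with Lefschetz injectivity, whereas you combine $\alpha+\beta$ into a single primitive $\gamma$, kill the off-bidegree pieces of $d\gamma$ by wedging against the $(p,p)$-form $\psi\wedge\overline{\psi}$, and read off $\|\psi\|_{L^2}^2=0$ directly---but these are cosmetic variations on the same argument.
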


The paper is organized in the following way. In section \ref{preliminaries}, we set notation and introduce the objects which will be studied in the following. In section \ref{main results}, we give two basic lemmas and prove the main Theorem \ref{vanishing}.
In section \ref{examples}, we give some examples, on compact quotients of $SL(2,\C)$, on the Calabi-Eckmann manifold $\mathbb{S}^3\times\mathbb{S}^3$ and on the Kodaira surface of secondary type. Finally we give two applications of Theorem \ref{main-cpt} to compact nilmanifolds endowed with an invariant complex structure (see Propositions \ref{prop1} and \ref{prop2}).

\medskip\medskip
\noindent{\em Acknowledgments.} We are grateful to Daniele Angella, Paul Gauduchon, and Valentino Tosatti for useful conversations and helpful comments.

\section{Preliminaries}\label{preliminaries}
\label{preliminaries}
Let $(M,J,g,\omega)$ be a Hermitian manifold of complex dimension $n$, where $M$ is a smooth manifold of real dimension $2n$, $J$ is 
a complex structure on $M$, $g$ is a $J$-invariant Riemannian metric on $M$, and $\omega$ denotes the fundamental $(1,1)$-form associated to the metric $g$. We denote by $h$ the Hermitian extension of $g$ on the complexified tangent bundle $T^\C M=TM\otimes_\R\C$, and by the same symbol $g$ the $\C$-bilinear extension of $g$ on $T^\C M$. Also denote by the same symbol $\omega$ the $\C$-bilinear extension of the fundamental form $\omega$ of $g$ on $T^\C M$. Recall that $h(u,v)=g(u,\bar{v})$ for all $u,v\in T^{1,0}M$, and $\omega(u,v)=g(Ju,v)$ for all $u,v\in TM$. The Hermitian metric $g$ is said to be {\em Gauduchon} if $\del\delbar\omega^{n-1}=0$.

We denote by $\Omega^r(M,\C)=\Gamma(\Lambda^rM\otimes_\R\C)$ the space of complex $r$-forms, and by $\Omega^{p,q}(M)=\Gamma(\Lambda^{p,q}M)$ the space of $(p,q)$-forms on $M$. Denoting by $*:\Omega^{p,q}(M)\to \Omega^{n-p,n-q}(M)$ the complex anti-linear Hodge operator 
associated with $g$, the {\em Bott-Chern Laplacian} and {\em Aeppli Laplacian} 
$\tilde\Delta_{BC}$ and $ \tilde\Delta_{A}$ are the $4$-th order elliptic differential operators defined respectively as (see \cite[p. 71]{KS} and \cite[p. 8]{S})
\begin{equation*}
\tilde\Delta_{BC} \;:=\;
\del\delbar\delbar^*\del^*+
\delbar^*\del^*\del\delbar+\delbar^*\del\del^*\delbar+
\del^*\delbar\delbar^*\del+\delbar^*\delbar+\del^*\del
\end{equation*}
and
\begin{equation*}
 \tilde\Delta_{A} \;:=\; \del\del^*+\delbar\delbar^*+
\delbar^*\del^*\del\delbar+\del\delbar\delbar^*\del^*+
\del\delbar^*\delbar\del^*+\delbar\del^*\del\delbar^*\,,
\end{equation*}
where, as usual
\begin{equation*}
\del^*:=-*\del\, *\,,\qquad \delbar^*:=-*\delbar\, *.
\end{equation*}
Note that $*\tilde\Delta_{BC}=\tilde\Delta_{A}*$ and $\tilde\Delta_{BC}*=*\tilde\Delta_{A}$, then $u\in\ker\tilde\Delta_{BC}$ if and only if $*u\in\ker\tilde\Delta_{A}$.
If $M$ is compact, then 
\begin{equation*}
u\in\ker\tilde\Delta_{BC}\iff\del u=0,\ \delbar u=0,\ \del^*\delbar^*u=0,
\end{equation*}
and
\begin{equation*}
v\in\ker\tilde\Delta_{A}\iff\del^* v=0,\ \delbar^* v=0,\ \del\delbar v=0.
\end{equation*}
Moreover, according to \cite{S}, $\ker\tilde\Delta_{BC}$ and $\ker\tilde\Delta_{A}$ are finite dimensional complex vector spaces, and
\begin{equation*}
\ker\tilde\Delta_{BC|_{\Omega^{p,q}(M)}}\cong H^{p,q}_{BC}(M),\ \ \ \ker\tilde\Delta_{A|_{\Omega^{p,q}(M)}}\cong H^{p,q}_{A}(M).
\end{equation*}
More explicitly, given any $(p,q)$-form $\phi$, we may decompose $\phi$ as
\begin{equation}\label{bc-decomp}
\phi=h_{BC}(\phi)+\del\delbar\gamma+\del^*\alpha+\delbar^*\beta,
\end{equation}
where $h_{BC}(\phi)\in H^{p,q}_{BC}(M)$, and
\begin{equation}\label{a-decomp}
\phi=h_{A}(\phi)+\del^*\delbar^*\eta+\del\mu+\delbar\lambda,
\end{equation}
where $h_{A}(\phi)\in H^{p,q}_{A}(M)$. We will refer to (\ref{bc-decomp}), respectively (\ref{a-decomp}), as the Bott-Chern, respectively Aeppli, decomposition of the $(p,q)$-form $\phi$.

Let $(M,J,g,\omega)$ be a Hermitian manifold of complex dimension $n$. Denote 
by $\vol=\frac{\omega^n}{n!}$ the standard volume form. 
Let $\langle\,,\rangle$ be the pointwise Hermitian inner product induced by $g$ on 
the space of tensors of complex bigrade $(p,q)$. 
Given any tensor $\phi$, set
\begin{gather*}
\vert\varphi\vert^2:=\langle\varphi,\varphi\rangle,\\
\lVert \varphi\rVert_{L^2}^2:=\int_M\vert\varphi\vert^2\vol,
\end{gather*}
and
\begin{equation*}
L^2(M):=\left\{\varphi\in\Omega^r(M)\,\,\,\vert\,\,\, 0\le r\le 2n,\ \lVert \varphi\rVert_{L^2}<\infty \right\}.
\end{equation*}
For any tensors $\varphi,\psi$, denote by $\la\,,\ra$ the $L^2$ Hermitian product defined by
\begin{equation*}
\la\varphi,\psi\ra :=\int_M\langle\varphi,\psi\rangle \vol.
\end{equation*}
For any given  tensor $\varphi$, we also set 
\begin{equation*}
\lVert \varphi\rVert_{L^\infty}:=\sup_{x\in M}\vert\varphi\vert(x)
\end{equation*}
and we call $\varphi$ {\em bounded} if $
\lVert \varphi\rVert_{L^\infty}<\infty$. Furthermore, if $\phi\in\Omega^r(M)$, $\varphi=d\eta$, and $\eta$ is bounded, then $\varphi$ is said to be {\em d-bounded}. 
We say that a $(p,q)$-form $\eta\in\Omega^{p,q}(M)$ is $(\del+\delbar)${\em-bounded} if $\eta=\del\mu+\delbar\lambda$, and $\mu$ and $\lambda$ are bounded. In particular $\del\delbar\eta=0$.

\section{Main results}\label{main results}
We start by proving the following lemmas.
\begin{lemma} \label{star}
Let $(M,J,g,\omega)$ be a Hermitian manifold of complex dimension $n\ge2$. Let $1\le p\le n-1$ and $\psi$ be a $(p,0)$-form on $M$. Set 
$$
c_{n,p}=(-1)^{\frac{p(p+1)}{2}}(-i)^{n-p}(n-p)!
$$
Then,
$$
*(\omega^{n-1}\wedge\psi)=c_{n,p}\overline{\psi}
$$
\end{lemma}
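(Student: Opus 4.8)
The statement asserts that $*(\omega^{n-1}\wedge\psi)=c_{n,p}\overline\psi$ for a $(p,0)$-form $\psi$. The plan is to reduce everything to a pointwise computation in a $g$-unitary coframe and then match constants. First I would fix a point $x\in M$ and choose local $(1,0)$-forms $\varphi^1,\dots,\varphi^n$ that are orthonormal for the Hermitian inner product at $x$, so that $\omega=i\sum_{j=1}^n\varphi^j\wedge\overline{\varphi^j}$ at $x$. Both sides of the claimed identity are $\C$-linear in $\psi$, so it suffices to verify it on a basis element $\psi=\varphi^I:=\varphi^{i_1}\wedge\cdots\wedge\varphi^{i_p}$ with $i_1<\cdots<i_p$; by relabelling the coframe we may even assume $I=(1,\dots,p)$, i.e.\ $\psi=\varphi^1\wedge\cdots\wedge\varphi^p$.

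The key computational step is then to expand $\omega^{n-1}\wedge\psi$. Since $\psi$ already contains $\varphi^1,\dots,\varphi^p$, only the terms of $\omega^{n-1}$ that supply exactly the complementary holomorphic factors $\varphi^{p+1},\dots,\varphi^n$ together with \emph{all} $n$ antiholomorphic factors $\overline{\varphi^1},\dots,\overline{\varphi^n}$ survive (any repeated $\varphi^j$ with $j\le p$ kills the wedge). Concretely, $\omega^{n-1}=i^{n-1}(n-1)!\sum_{|K|=n-1}\bigl(\bigwedge_{k\in K}\varphi^k\wedge\overline{\varphi^k}\bigr)$ up to the usual sign bookkeeping, and wedging with $\varphi^1\wedge\cdots\wedge\varphi^p$ forces $K=\{p+1,\dots,n\}$ to appear inside, leaving a single monomial proportional to $\varphi^1\wedge\cdots\wedge\varphi^n\wedge\overline{\varphi^{p+1}}\wedge\cdots\wedge\overline{\varphi^n}$ (up to a sign from reordering the factors, which contributes the $(-1)^{p(p+1)/2}$-type term and a power of $i$). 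Finally I apply the Hodge star to this monomial: $*\bigl(\varphi^{1\cdots n}\wedge\overline{\varphi^{p+1}}\wedge\cdots\wedge\overline{\varphi^n}\bigr)$ is, up to an explicit unimodular constant, $\overline{\varphi^1}\wedge\cdots\wedge\overline{\varphi^p}=\overline\psi$, using the standard formula for $*$ on a unitary coframe (recalling that $*$ here is conjugate-linear). Collecting the powers of $i$, the factorial $(n-p)!$ coming from the multinomial count of surviving terms, and the reordering signs yields exactly $c_{n,p}=(-1)^{p(p+1)/2}(-i)^{n-p}(n-p)!$.

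The only real subtlety is keeping the sign and phase bookkeeping consistent: the precise power of $i$ and the $(-1)$-sign depend on one's conventions for $\omega$, for the ordering of real versus complex indices in the volume form $\vol=\omega^n/n!$, and for the conjugate-linear Hodge star $*$. I expect the main obstacle to be purely this constant-tracking, which is best handled by first checking the normalization on the extreme case $p=n-1$ (where $\omega^{n-1}\wedge\psi$ is a top-degree $(n,n-1)\wedge(1,0)$... — more precisely an $(n,n)$-form times $\overline{\varphi^n}$-type correction) or on $n=2,\ p=1$, and then propagating. Once the constant is pinned down at a point, since $x$ was arbitrary and the construction is natural, the identity holds globally on $M$.
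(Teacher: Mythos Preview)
Your approach---reduce to a pointwise computation in a local unitary coframe, check on a basis monomial $\psi=\varphi^{1}\wedge\cdots\wedge\varphi^{p}$, and apply $*$ to the single surviving term---is exactly what the paper does.

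That said, there is a genuine inconsistency in your write-up, inherited from a typo in the statement: the exponent should be $n-p$, not $n-1$. (The paper's own proof silently computes $*(\omega^{n-p}\wedge\psi)$, and this is also how the lemma is used in Lemma~\ref{delstar} and Theorem~\ref{vanishing}; the factor $(n-p)!$ in $c_{n,p}$ is another tell.) Your sketch mixes the two: you expand $\omega^{n-1}=i^{n-1}(n-1)!\sum_{|K|=n-1}(\cdots)$, then assert the surviving monomial is $\varphi^{1\cdots n}\wedge\overline{\varphi^{p+1}}\wedge\cdots\wedge\overline{\varphi^{n}}$ and that the combinatorics yields $(n-p)!$. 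These are incompatible. A term with $|K|=n-1$ survives the wedge with $\varphi^{1}\wedge\cdots\wedge\varphi^{p}$ only if $K\cap\{1,\dots,p\}=\emptyset$, forcing $n-1\le n-p$, i.e.\ $p\le1$; for $p\ge2$ one simply has $\omega^{n-1}\wedge\psi=0$ on bidegree grounds, so there is nothing to compute. Once you replace $n-1$ by $n-p$ (so $|K|=n-p$ and the unique surviving choice is $K=\{p+1,\dots,n\}$, with multinomial weight $(n-p)!$), your plan goes through and matches the paper line for line.

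One minor normalization point: the paper writes $\omega=\frac{i}{2}\sum_j\phi^{j}\wedge\overline{\phi^{j}}$ in a unitary coframe, not $\omega=i\sum_j\varphi^{j}\wedge\overline{\varphi^{j}}$. The factors of $2$ cancel against the normalization of $*$ via $\vol=\omega^{n}/n!$, but if you want to land exactly on $c_{n,p}$ you should adopt the paper's convention throughout.
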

\begin{proof}
Let $\{\phi^1,\ldots,\phi^n\}$ be a local unitary coframe on $M$. Firstly, assume that $\psi$ is a $(p,0)$-form. Then, for $i_1<\dots<i_p$, $i_k=1,\dots,n$,
$$
\psi=\sum_{i_1<\dots<i_p} a_{i_1\dots i_p}\phi^{i_1\dots i_p},\quad \omega=\frac{i}{2}\sum_{j=1}^n \phi^j\wedge\overline{\phi^j},
$$
and
$$
\omega^{n-p}=(\frac{i}{2})^{n-p}(n-p)!
\sum_{i_1<\dots<i_p}\phi^{1\bar{1}\ldots\widehat{i_1\bar{i_1}}\ldots\widehat{i_p\bar{i_p}}\ldots n\bar{n}},
$$
where $\phi^{r\bar{s}}=\phi^r\wedge\overline{\phi^{s}}$ and $\widehat{s\bar{s}}$ means that the pair 
$s\bar{s}$ is omitted. Therefore,
$$
\omega^{n-p}\wedge\psi=(-1)^{\frac{p(p+1)}{2}}(\frac{i}{2})^{n-p}(n-p)!
\sum_{i_1<\dots<i_p}a_{i_1\dots i_p}\phi^{1\bar{1}\ldots i_1\hat{\bar{i_1}}\ldots i_p\hat{\bar{i_p}}\ldots n\bar{n}},
$$
and consequently
\begin{equation*}
\begin{split}
*(\omega^{n-p}\wedge\psi)&=2^{n-p}\overline{(-1)^{\frac{p(p+1)}{2}}(\frac{i}{2})^{n-p}(n-1)!}
\sum_{i_1<\dots<i_p}\overline{a}_{i_1\dots i_p}\phi^{\bar{i_1}\dots \bar{i_p}}\\
&=(-1)^{\frac{p(p+1)}{2}}(-i)^{n-p}(n-p)!\overline{\psi},
\end{split}
\end{equation*}
that is 
\begin{equation*}
*(\omega^{n-p}\wedge\psi)=c_{n,p}\overline{\psi}.\qedhere
\end{equation*}
\end{proof}

\begin{lemma}
\label{delstar}
Let $(M,J,g,\omega)$ be a Hermitian manifold of complex dimension $n\ge2$. Let $1\le p\le n-1$ and $\psi$ be a $(p,0)$-form on $M$ such that 
$$\partial\psi=0,\quad 
\overline{\partial}\psi=0.
$$
Then, 
$$
\partial^*(\omega^{n-p}\wedge\psi)=0,\qquad \overline{\partial}^*(\omega^{n-p}\wedge\psi)=0.
$$
\end{lemma}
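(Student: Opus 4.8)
The plan is to derive both identities directly from Lemma \ref{star} and the definitions $\partial^*=-*\,\partial\,*$, $\overline{\partial}^*=-*\,\overline{\partial}\,*$, using only that $\overline{\partial}$ and $\partial$ are $\C$-linear and that complex conjugation interchanges them. First I would record the bidegrees: since $\omega$ is a $(1,1)$-form and $\psi$ is a $(p,0)$-form, the form $\omega^{n-p}\wedge\psi$ has type $(n,n-p)$, so that $*$ sends it to a $(0,p)$-form; by Lemma \ref{star} this image is exactly $c_{n,p}\,\overline{\psi}$, with $c_{n,p}$ the (nonzero) constant defined there. (Note the statement of Lemma \ref{star} contains a harmless typo: it should read $*(\omega^{n-p}\wedge\psi)=c_{n,p}\overline{\psi}$, which is what its proof actually establishes, and that is the form in which I will use it.)

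For the $\overline{\partial}^*$-identity I would then compute
\[
\overline{\partial}^*(\omega^{n-p}\wedge\psi)\;=\;-*\,\overline{\partial}\,*(\omega^{n-p}\wedge\psi)\;=\;-*\,\overline{\partial}\bigl(c_{n,p}\,\overline{\psi}\bigr)\;=\;-c_{n,p}\,*\,\overline{\partial}\,\overline{\psi}\;=\;-c_{n,p}\,*\,\overline{\partial\psi},
\]
where I used $\C$-linearity of $\overline{\partial}$ and the relation $\overline{\partial}\,\overline{\psi}=\overline{\partial\psi}$. Since $\partial\psi=0$ by hypothesis, the right-hand side vanishes. Symmetrically, for the $\partial^*$-identity,
\[
\partial^*(\omega^{n-p}\wedge\psi)\;=\;-*\,\partial\,*(\omega^{n-p}\wedge\psi)\;=\;-c_{n,p}\,*\,\partial\,\overline{\psi}\;=\;-c_{n,p}\,*\,\overline{\overline{\partial}\psi}\;=\;0,
\]
using $\overline{\partial}\psi=0$. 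This completes the argument.

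The proof is entirely formal, so I do not expect a genuine obstacle; the only thing to be careful about is bookkeeping of bidegrees, so that Lemma \ref{star} applies verbatim and so that the codifferentials $\partial^*,\overline{\partial}^*$ are interpreted as the formal (pointwise-defined) adjoints $-*\,\partial\,*$, $-*\,\overline{\partial}\,*$ — no integration by parts or global hypotheses on $M$ are needed here. It is worth noting for later use that the hypothesis $\partial\psi=0$ is what kills $\overline{\partial}^*$, while $\overline{\partial}\psi=0$ is what kills $\partial^*$; both closedness conditions on $\psi$ are therefore used, one for each identity.
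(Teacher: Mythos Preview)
Your argument is correct and essentially identical to the paper's proof: both apply Lemma \ref{star} and the definitions $\partial^*=-*\partial*$, $\overline{\partial}^*=-*\overline{\partial}*$, then conclude via $\partial\,\overline{\psi}=\overline{\overline{\partial}\psi}=0$ and $\overline{\partial}\,\overline{\psi}=\overline{\partial\psi}=0$. One harmless slip: since the paper's Hodge $*$ is conjugate-linear, pulling the scalar $c_{n,p}$ through the outer $*$ should produce $\overline{c_{n,p}}$ (as in the paper's displayed computation) rather than $c_{n,p}$, but this does not affect the conclusion because the factor it multiplies is zero.
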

\begin{proof}
Let $\psi$ be a closed $(p,0)$-form. By the definition of $\partial^*, \overline{\partial}^*$,
and Lemma \ref{star}, we have
\begin{equation*}
\begin{split}
\partial^*(\omega^{n-p}\wedge\psi)&= -*\partial*(\omega^{n-p}\wedge\psi)=-\overline{c_{n,p}}*\partial\overline{\psi}=0\\
\overline{\partial}^*(\omega^{n-p}\wedge\psi)&= -*\overline{\partial}*(\omega^{n-p}\wedge\psi)=-\overline{c_{n,p}}*\overline{\partial}\overline{\psi}=0.\qedhere
\end{split}
\end{equation*}
\end{proof}
We can now prove the following vanishing result.
\begin{theorem}
\label{vanishing}
Let $(M,J,g,\omega)$ be a complete Hermitian manifold of complex dimension $n\ge2$. Let $1\le p\le n-1$ and assume that 
$\omega^{n-p}$ is $(\partial+\overline{\partial})$-bounded.
Let $\psi$ be a $(p,0)$-form on $M$ such that
$$
\psi\in L^2(M),\qquad \partial\psi=0,\qquad \overline{\partial}\psi=0.
$$
Then $\psi=0$.
\end{theorem}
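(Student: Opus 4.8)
The plan is to integrate the $(2n)$-form $\omega^{n-p}\wedge\psi\wedge\overline{\psi}$ (up to a constant, this is $|\psi|^2\vol$ by Lemma \ref{star}) and show it must be zero. Concretely, write $\omega^{n-p}=\partial\mu+\overline{\partial}\lambda$ with $\mu,\lambda$ bounded, so that $\omega^{n-p}\wedge\psi\wedge\overline{\psi}=(\partial\mu+\overline{\partial}\lambda)\wedge\psi\wedge\overline{\psi}$. Since $\partial\psi=\overline{\partial}\psi=0$ and $\psi$ is of type $(p,0)$ (so $\overline{\partial}\,\overline{\psi}=0$ as well, $\overline{\psi}$ being of type $(0,p)$ and $d$-closed), the form $\psi\wedge\overline{\psi}$ is $d$-closed. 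Hence $\omega^{n-p}\wedge\psi\wedge\overline{\psi}=d(\text{something involving }\mu,\lambda,\psi,\overline{\psi})$, i.e. it is an exact $(2n)$-form, at least formally. The heart of the argument is to justify that $\int_M d(\cdots)=0$ even though $M$ is noncompact; this is exactly the kind of $L^2$-Stokes argument used in Gromov's vanishing theorem.

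First I would set $\eta:=\mu\wedge\psi\wedge\overline{\psi}+(-1)^{?}\lambda\wedge\psi\wedge\overline{\psi}$ (with the appropriate sign so that $d\eta=\omega^{n-p}\wedge\psi\wedge\overline{\psi}$, using that $\psi\wedge\overline{\psi}$ is $d$-closed and that the pure-type components match: $\partial$ on the $\mu$-term and $\overline{\partial}$ on the $\lambda$-term each produce a piece of $\omega^{n-p}\wedge\psi\wedge\overline{\psi}$, while the mixed pieces $\overline{\partial}\mu\wedge\cdots$ and $\partial\lambda\wedge\cdots$ vanish for bidegree reasons since $\omega^{n-p}\wedge\psi\wedge\overline{\psi}$ is of type $(n,n)$ and the only $(n,n)$-contributions come from the matching operators). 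Then I would estimate $\|\eta\|_{L^1}\le C\,\|\mu\|_{L^\infty}\|\psi\|_{L^2}\|\overline{\psi}\|_{L^2}+C'\|\lambda\|_{L^\infty}\|\psi\|_{L^2}^2<\infty$ by Cauchy–Schwarz, using that $\psi\in L^2$ and $\mu,\lambda$ are bounded. So $\eta$ is an $L^1$ form of degree $2n-1$ whose exterior derivative is $L^1$.

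The main obstacle is the noncompact Stokes step: one needs that for a $d$-closed, $L^1$, top-degree form $d\eta$ with $\eta\in L^1$ on a \emph{complete} manifold, $\int_M d\eta=0$. The standard device is a family of cutoff functions $\chi_R$ on $M$ with $\chi_R\equiv1$ on a ball $B_R$, supported in $B_{2R}$, and $|d\chi_R|\le C/R$ (these exist precisely because $g$ is complete, e.g. built from the distance function to a fixed point). Then $\int_M \chi_R\,d\eta=-\int_M d\chi_R\wedge\eta$, and the right-hand side is bounded by $(C/R)\,\|\eta\|_{L^1}\to0$ as $R\to\infty$, while the left-hand side converges to $\int_M d\eta$ by dominated convergence ($d\eta\in L^1$). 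Therefore $\int_M d\eta=0$. Combining this with Lemma \ref{star}, which gives $\omega^{n-p}\wedge\psi\wedge\overline{\psi}=\pm\,c_{n,p}\,|\psi|^2\,\vol$ up to a positive constant (one checks the constant is a nonzero multiple of a positive real after pairing $\psi$ with $\overline{\psi}$ through the Hodge star), we conclude $\int_M|\psi|^2\vol=0$, hence $\psi\equiv0$.

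One technical point I would be careful about: the cutoff argument as stated needs $\eta\in L^1$ and $d\eta\in L^1$, both of which I have, but to pass the limit cleanly it is convenient to note $d(\chi_R\eta)=d\chi_R\wedge\eta+\chi_R\,d\eta$ has compact support, so genuinely $\int_M d(\chi_R\eta)=0$ by ordinary Stokes, and then rearrange. Also, strictly speaking $\psi$ is only assumed $L^2$ and $d$-closed, not smooth a priori — but $d$-closed $L^2$ forms that are weak solutions are smooth by elliptic regularity (here $\psi$ is in the kernel of $\partial$ and $\overline{\partial}$, hence harmonic for a suitable elliptic operator), so this is not a real issue; I would mention it in one line rather than dwell on it. Alternatively, one can use Lemma \ref{delstar}: $\omega^{n-p}\wedge\psi$ is both $\partial^*$- and $\overline{\partial}^*$-closed, and $*(\omega^{n-p}\wedge\psi)=c_{n,p}\overline{\psi}$ by Lemma \ref{star}, which reframes the whole computation in terms of $\langle\omega^{n-p}\wedge\psi,\;\partial\mu+\overline{\partial}\lambda\rangle$ and an integration by parts justified by the same completeness-based cutoff — this is the route I would actually write up, since it dovetails most directly with the two lemmas already proved.
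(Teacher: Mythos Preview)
Your proposal is correct, and the route you say you would actually write up --- pairing $\omega^{n-p}\wedge\psi$ with $\partial(\mu\wedge\psi)$ and $\overline{\partial}(\lambda\wedge\psi)$ via Lemma \ref{delstar} and justifying the integration by parts with cutoff functions coming from completeness --- is exactly the paper's proof. The only cosmetic differences are that the paper uses cutoffs $a_\nu$ with $|da_\nu|\le 2^{-\nu}$ (from \cite[Chapter VIII, Lemma 2.4]{DE}) rather than your $|d\chi_R|\le C/R$, and concludes $\omega^{n-p}\wedge\psi=0$ (then invokes the Lefschetz isomorphism) rather than going directly to $|\psi|^2$; both variants yield the same argument.
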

\begin{proof}
By completeness, as stated in \cite[Chapter VIII, Lemma 2.4]{DE}, let $\{K_\nu\}_{\nu\in\N}$ be a sequence of compact subsets of $M$, and $\{a_\nu:K_\nu\lr [0,1]\subset\R\}_{\nu\in\N}$ be a sequence of smooth cut off functions with compact support, such that the following properties hold:
\begin{itemize}
\item $\bigcup_{\nu\in\N} K_\nu=M$ and $K_\nu\subset \mathop K\limits^ \circ$$_{\nu+1}$;
\item $a_\nu=1$ in a neighbourhood of $K_\nu$ 
and $\supp{a_\nu}\subset \mathop K\limits^ \circ$$_{\nu+1}$;
\item $|d a_\nu|\le 2^{-\nu}$.
\end{itemize}

Let $\psi$ be as in the statement of the theorem.  Since $\del\psi=0$, $\delbar\psi=0$ and $\omega^{n-p}$ is $(\del+\delbar)$-bounded, note that
\begin{equation}\label{omegapsi}
\omega^{n-p}\wedge\psi=(\del\mu+\delbar\lambda)\wedge\psi=\del(\mu\wedge\psi)+\delbar(\lambda\wedge\psi).
\end{equation}
By Lemma \ref{delstar}, we have
\begin{equation}\label{del*}
\begin{split}
0&=\la\del^*(\omega^{n-p}\wedge\psi),a_\nu\mu\wedge\psi\ra\\
&=\la\omega^{n-p}\wedge\psi,\del(a_\nu\mu\wedge\psi)\ra\\
&=\la\omega^{n-p}\wedge\psi,\del a_\nu\wedge\mu\wedge\psi\ra+\la\omega^{n-p}\wedge\psi,a_\nu\del(\mu\wedge\psi)\ra.
\end{split}
\end{equation}
Then, since $\omega^{n-p}$ and $\mu$ are bounded, $\psi$ is $L^2$ and $|d a_\nu|\le 2^{-\nu}$, we get that 
$$
\lim_{\nu\to\infty}\la\omega^{n-p}\wedge\psi,\del a_\nu\wedge\mu\wedge\psi\ra =0.
$$
Indeed
\begin{equation*}
\begin{split}
\Big|\int_M \langle\omega^{n-p}\wedge\psi,\del a_\nu\wedge\mu\wedge\psi\rangle\vol\Big|&\le
\int_M \vert\omega^{n-p}\vert \vert\mu\vert \vert da_\nu\vert \vert\psi\vert^2 \vol\\
&\le2^{-\nu}\lVert\omega^{n-p}\rVert_{L^\infty}\lVert\mu\rVert_{L^\infty}\lVert\psi\rVert^2_{L^2}.
\end{split}
\end{equation*}
Thus, taking into account \eqref{del*}, we also get that 
$$
\lim_{\nu\to\infty}\la\omega^{n-p}\wedge\psi,a_\nu\del(\mu\wedge\psi)\ra=0.
$$
By the same calculation, we obtain
\begin{equation*}
\begin{split}
0&=\la\delbar^*(\omega^{n-p}\wedge\psi),a_\nu\lambda\wedge\psi\ra\\
&=\la\omega^{n-p}\wedge\psi,\delbar(a_\nu\lambda\wedge\psi)\ra\\
&=\la\omega^{n-p}\wedge\psi,\delbar a_\nu\wedge\lambda\wedge\psi\ra+\la\omega^{n-p}\wedge\psi,a_\nu\delbar(\lambda\wedge\psi)\ra,
\end{split}
\end{equation*}
and both summands approach $0$. Therefore, by the above computations, taking into account \eqref{omegapsi}, we derive
\begin{equation*}
\begin{split}
0=\lim_{\nu\to\infty}&\Big[\la\omega^{n-p}\wedge\psi,a_\nu\del(\mu\wedge\psi)\ra+\la\omega^{n-p}\wedge\psi,a_\nu\delbar(\lambda\wedge\psi)\ra\Big]\\
=\lim_{\nu\to\infty} &\la\omega^{n-p}\wedge\psi,a_\nu\omega^{n-p}\wedge\psi\ra
=\la\omega^{n-p}\wedge\psi,\omega^{n-p}\wedge\psi\ra,
\end{split}
\end{equation*}
by the monotone convergence theorem or the dominated convergence theorem. Thus, $\omega^{n-p}\wedge\psi=0$ and $\psi=0$, since the wedge product by $\omega^{n-p}$, the Lefschetz operator applied $n-p$ times, is an isomorphism at the level of the exterior algebra (see \cite[Prop. 1.2.30]{H}).
\end{proof}

Note that if $M$ is compact in the hypothesis of Theorem \ref{vanishing}, then the proof is even simpler. There is no need of cut-off functions and it suffices to use the Stokes theorem and Lemma \ref{delstar}. Also note that, if $M$ is compact, then the case $n=2$ is no more interesting, since every Gauduchon metric gives rise to a non zero class in Aeppli cohomology. Therefore, as a direct consequence of Theorem \ref{vanishing}, we obtain the following.
\begin{theorem}
\label{cor-cpt}
Let $(M,J,g,\omega)$ be a compact Hermitian manifold of complex dimension $n\ge3$.  Let $1\le p\le n-1$. If $0=[\omega^{n-p}]_{A}\in H^{n-p,n-p}_A(M)$, then $H^{p,0}_{BC}(M)=0$.
\end{theorem}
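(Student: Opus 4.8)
The plan is to derive Theorem \ref{cor-cpt} from Theorem \ref{vanishing} by translating the cohomological hypotheses into the analytic ones. First I would observe that on a compact manifold any Hermitian metric is automatically complete and all smooth forms are bounded and lie in $L^2(M)$, so these hypotheses of Theorem \ref{vanishing} are free. Next, the assumption $0=[\omega^{n-p}]_A\in H^{n-p,n-p}_A(M)$ means precisely, by the definition of Aeppli cohomology, that $\omega^{n-p}=\del\mu+\delbar\lambda$ for some forms $\mu\in\Omega^{n-p-1,n-p}(M)$ and $\lambda\in\Omega^{n-p,n-p-1}(M)$; on a compact manifold these are bounded, so $\omega^{n-p}$ is $(\del+\delbar)$-bounded in the sense defined in Section \ref{preliminaries}.

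Then I would take an arbitrary class in $H^{p,0}_{BC}(M)$ and pick a representative $\psi\in\Omega^{p,0}(M)$; by the characterization of $\ker\tilde\Delta_{BC}$ on compact manifolds recalled in Section \ref{preliminaries}, or more simply because a $(p,0)$-form has no $\del\delbar$-exact ambiguity available to it (there are no $(p-1,-1)$-forms), one sees $H^{p,0}_{BC}(M)\cong\{\psi\in\Omega^{p,0}(M):\del\psi=0,\ \delbar\psi=0\}$. Indeed the image of $\del\delbar$ in bidegree $(p,0)$ is zero, so a Bott-Chern class in bidegree $(p,0)$ is just a $d$-closed $(p,0)$-form. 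Hence any element of $H^{p,0}_{BC}(M)$ is represented by a $(p,0)$-form $\psi$ with $\psi\in L^2(M)$, $\del\psi=0$, $\delbar\psi=0$, and Theorem \ref{vanishing} forces $\psi=0$. Therefore $H^{p,0}_{BC}(M)=0$.

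I do not expect a genuine obstacle here: the content is entirely in Theorem \ref{vanishing}, and this corollary is a dictionary translation. The only point requiring a word of care is the identification $H^{p,0}_{BC}(M)\cong\{d\text{-closed }(p,0)\text{-forms}\}$, i.e. noting that $\im(\del\delbar)\cap\Omega^{p,0}(M)=0$ because there are no forms of bidegree $(p-1,-1)$; once that is said, the rest is immediate. One could alternatively phrase the argument without mentioning Bott-Chern representatives at all, simply saying that a nonzero class would yield a nonzero $d$-closed $L^2$ $(p,0)$-form contradicting Theorem \ref{vanishing}, but spelling out the representative makes the logical chain most transparent.
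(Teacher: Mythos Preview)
Your proposal is correct and matches the paper's approach: the paper simply states that Theorem \ref{cor-cpt} is a direct consequence of Theorem \ref{vanishing}, and your argument spells out exactly that reduction (compactness gives completeness, boundedness and $L^2$; vanishing Aeppli class gives $(\del+\delbar)$-boundedness of $\omega^{n-p}$; $H^{p,0}_{BC}(M)$ is identified with $d$-closed $(p,0)$-forms since $\im(\del\delbar)\cap\Omega^{p,0}(M)=0$).
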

Finally, as an immediate consequence of Theorem \ref{cor-cpt}, we get:
\begin{theorem}\label{gau}
Let $M$ be a compact complex manifold of complex dimension $n\ge3$. If $M$ carries a Gauduchon metric $\omega$ such that $0=[\omega^{n-1}]_{A}\in H^{n-1,n-1}_A(M)$, then $H^{1,0}_{BC}(M)=0$.
\end{theorem}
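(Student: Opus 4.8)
The plan is to deduce Theorem \ref{gau} as an immediate specialization of Theorem \ref{cor-cpt}, so the work amounts to checking that the hypotheses of the latter are met with $p=1$. First I would recall that a Gauduchon metric on a compact complex manifold $M$ of complex dimension $n$ is, by definition, a Hermitian metric whose fundamental form $\omega$ satisfies $\del\delbar\omega^{n-1}=0$; equivalently, $\omega^{n-1}$ is a $\del\delbar$-closed $(n-1,n-1)$-form, hence it represents a well-defined Aeppli cohomology class $[\omega^{n-1}]_A\in H^{n-1,n-1}_A(M)$. This is exactly the setup of Theorem \ref{cor-cpt} in the extremal case $n-p=n-1$, i.e.\ $p=1$, which is admissible since $1\le p=1\le n-1$ holds precisely when $n\ge2$, and a fortiori when $n\ge3$.

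Next I would invoke the hypothesis that the Aeppli class $[\omega^{n-1}]_A$ vanishes. Applying Theorem \ref{cor-cpt} with $p=1$ to the compact Hermitian manifold $(M,J,g,\omega)$ — legitimate because $n\ge3$ — yields $H^{1,0}_{BC}(M)=0$, which is the desired conclusion. There is essentially nothing to prove beyond matching notation: the assumption ``$M$ carries a Gauduchon metric $\omega$ with $0=[\omega^{n-1}]_A$'' is verbatim the hypothesis ``$0=[\omega^{n-p}]_A$ with $p=1$'' of Theorem \ref{cor-cpt}.

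I do not anticipate any genuine obstacle here; the only point requiring a word is the observation that the Gauduchon condition is exactly what guarantees $\omega^{n-1}$ defines an Aeppli class in the first place, so that the statement ``$[\omega^{n-1}]_A=0$'' is meaningful. Everything else is contained in the chain Theorem \ref{vanishing} $\Rightarrow$ Theorem \ref{cor-cpt} $\Rightarrow$ Theorem \ref{gau} already in place. In short, the proof is the single line: apply Theorem \ref{cor-cpt} with $p=1$.

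\begin{proof}
A Gauduchon metric $\omega$ on $M$ satisfies $\del\delbar\omega^{n-1}=0$ by definition, so $\omega^{n-1}$ is a $\del\delbar$-closed $(n-1,n-1)$-form and therefore determines a class $[\omega^{n-1}]_A\in H^{n-1,n-1}_A(M)$. Since $n\ge3$, Theorem \ref{cor-cpt} applies to the compact Hermitian manifold $(M,J,g,\omega)$ with $p=1$ (note $1\le p\le n-1$). By hypothesis $0=[\omega^{n-1}]_A\in H^{n-1,n-1}_A(M)$, hence Theorem \ref{cor-cpt} gives $H^{1,0}_{BC}(M)=0$.
\end{proof}
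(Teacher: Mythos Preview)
Your proposal is correct and matches the paper's own treatment: the paper states Theorem \ref{gau} as an immediate consequence of Theorem \ref{cor-cpt} with no additional argument, which is exactly what you do by specializing to $p=1$. Your extra remark that the Gauduchon condition is what makes $[\omega^{n-1}]_A$ well-defined is a harmless clarification, though strictly speaking the vanishing hypothesis $[\omega^{n-1}]_A=0$ already forces $\del\delbar\omega^{n-1}=0$.
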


\section{Examples and applications}\label{examples}

We start by giving a direct application of Theorem \ref{cor-cpt}.
\begin{example}\label{ex-sl2c}{\rm 
Let $M=\Gamma\backslash G$, where $G$ is the complex Lie group $\SL(2,\C)$, and $\Gamma$ is a discrete cocompact subgroup. As a complex basis for the complex Lie algebra $\g=\sl(2,\C)$ of the Lie group $G$, take the matrices
\begin{equation*}
Z_1=\frac12
\begin{pmatrix}
i&0\\
0&-i
\end{pmatrix},\ \ \ \ 
Z_2=\frac12
\begin{pmatrix}
0&-1\\
1&0
\end{pmatrix},\ \ \ \ 
Z_3=\frac12
\begin{pmatrix}
0&i\\
i&0
\end{pmatrix},
\end{equation*}
such that the structure equations of the Lie algebra are
\begin{equation*}
[Z_1,Z_2]=-Z_3,\ \ \ \ 
[Z_1,Z_3]=Z_2,\ \ \ \ 
[Z_2,Z_3]=-Z_1.
\end{equation*}
Denote by $\phi^1,\phi^2,\phi^3$ the dual basis of $Z_1,Z_2,Z_3$, i.e., $\phi^j(Z_i)=\delta^j_i$. The covectors $\phi^1,\phi^2,\phi^3\in\g^*$ can be seen as a $G$-left-invariant basis of holomorphic $(1,0)$-forms on $G$ and on $M$.
Since $d\alpha(x,y)=-\alpha([x,y])$, for $\alpha\in\g^*$ and $x,y\in\g$, we derive the complex structure equations
\begin{equation}
\label{struc-sl2c}
d\phi^1=\phi^{23},\ \ \ \ d\phi^2=-\phi^{13},\ \ \ \ d\phi^3=\phi^{12}.
\end{equation}
Let us consider on $M$ the Hermitian metric whose fundamental form is
\begin{equation*}
\omega=\frac{i}{2}(\phi^{1\bar{1}}+\phi^{2\bar{2}}+\phi^{3\bar{3}}).
\end{equation*}
Using the structure equations (\ref{struc-sl2c}), one easily checks that $d\omega^2=0$. Moreover, as observed by \cite[p. 467]{OUV}, every left-invariant $(2,2)$-form on $M$ is $d$-exact. It can be seen using again the structure equations (\ref{struc-sl2c}). Hence, $\omega^2$ is exact. Therefore $0=[\omega^{2}]_{A}\in H^{2,2}_A(M)$. Consequently, Theorem \ref{cor-cpt} applies and $H^{1,0}_{BC}(M)=0$. 
To show that $H^{1,0}_{BC}(M)=0$, we can also argue in this way. Let $[\alpha^{1,0}]_{BC}\in H^{1,0}_{BC}(M)$; since $\alpha^{1,0}$ is $d$-closed, then $\delbar\alpha^{1,0}=0$. By \cite[Theorem 2]{W}, the holomorphic cohomology ring of $M$ is isomorphic with the cohomology ring of the complex Lie algebra $\g$ of the complex Lie group $G$. Therefore, by \cite[Theorem 2]{W}, 
$$
\alpha^{1,0} =\sum_{j=1}^3c_j\phi^j,\qquad c_j\in\C,\,\,\, j=1,2,3.
$$
Hence, by the structure equations \eqref{struc-sl2c}, $d\alpha^{1,0}=0$ if and only if $c_j=0,$ for $j=1,2,3$, i.e., $\alpha^{1,0}=0$, so that $H^{1,0}_{BC}(M)=0$.\qed
}
\end{example}

The following example shows that the condition on the Hermitian metric in Theorem \ref{vanishing} is only sufficient and not necessary for the vanishing of $L^2$ closed $(p,0)$-forms in the case of complex dimension $n\ge3$.

\begin{example} {\em Let $M=\mathbb{S}^3\times\mathbb{S}^3$, where $\mathbb{S}^3$ is identified with the special unitary group $SU(2)$ of $2\times2$ matrices. Let $\mathfrak{su}(2)$ be the Lie algebra of $SU(2)$ and denote by $\{E_1,E_2,E_3\}$ and $\{F_1,F_2,F_3\}$ the real basis of the two copies of $\mathfrak{su}(2)$ such that 
$$
\begin{array}{lll}
[E_1,E_2]=2E_3,\,& [E_1,E_3]=-2E_2,\,& [E_2,E_3]=2E_1,\\[5pt]
[F_1,F_2]=2F_3,\,& [F_1,F_3]=-2F_2,\,& [F_2,F_3]=2F_1.
\end{array}
$$
Precisely, using the notation of Example \ref{ex-sl2c}, $E_1=F_1=2Z_3$, $E_2=F_2=2Z_2$, $E_3=F_3=2Z_1$.
Denote by $\{E^1,E^2,E^3\}$, and $\{F^1,F^2,F^3\}$, the dual coframe of $\{E_1,E_2,E_3\}$, respectively $\{F_1,F_2,F_3\}$. Then the following structure equations hold
$$
\left\{
\begin{array}{ll}
 dE^1&=-2E^2\wedge E^3\\
 dE^2&=2E^1\wedge E^3\\
 dE^3&=-2E^1\wedge E^2\\
 dF^1&=-2F^2\wedge F^3\\
 dF^2&=2F^1\wedge F^3\\
 dF^3&=-2F^1\wedge F^2.
\end{array}
\right.
$$
Let $J$ be the almost complex structure on $M$ defined by the following $(1,0)$-complex forms
$$
\left\{
\begin{array}{ll}
 \psi^1&=E^1+iE^2\\
 \psi^2&=F^1+iF^2\\
 \psi^3&=E^3+iF^3.
\end{array}
\right.
$$
Then, the following complex structure equations hold
$$
\left\{
\begin{array}{ll}
 d\psi^1&=i\psi^{13}+i\psi^{1\bar3}\\
 d\psi^2&=\psi^{23}-\psi^{2\bar3}\\
 d\psi^3&=-i\psi^{1\bar1}+\psi^{2\bar2}
 \end{array}
\right.
$$
Therefore $J$ is an integrable almost complex structure on $M$, which turns to be the Calabi-Eckmann 
complex structure on $\mathbb{S}^3\times\mathbb{S}^3$. 
Then, by \cite[p.359]{TT}, the Bott-Chern 
cohomology of $M$ is given by
\begin{equation*}
\begin{split}
H_{BC}^{0,0}(M)&=\span_\C\langle[1]_{BC}\rangle,\\
H_{BC}^{1,1}(M)&=\span_\C\langle[\psi^{1\bar{1}}]_{BC},[\psi^{2\bar{2}}]_{BC}\rangle,\\
H_{BC}^{2,1}(M)&=\span_\C\langle[\psi^{23\bar{2}}+i\psi^{13\bar{1}}]_{BC}\rangle,\\
H_{BC}^{1,2}(M)&=\span_\C\langle[\psi^{2\bar{2}\bar{3}}-i\psi^{1\bar{1}
\bar{3}}]_{BC}\rangle,\\
H_{BC}^{2,2}(M)&=\span_\C\langle[\psi^{12\bar{1}\bar{2}}]_{BC}\rangle,\\
H_{BC}^{3,2}(M)&=\span_\C\langle[\psi^{123\bar{1}\bar{2}}]_{BC}\rangle,\\
H_{BC}^{2,3}(M)&=\span_\C\langle[\psi^{12\bar{1}\bar{2}\bar{3}}]_{BC}\rangle,\\
H_{BC}^{3,3}(M)&=\span_\C\langle[\psi^{123\bar{1}\bar{2}\bar{3}}]_{BC}\rangle,\\
\end{split}
\end{equation*}
where all the representatives are Bott-Chern harmonic with respect to the Hermitian metric $g$ on $M$ 
whose fundamental form is given by 
$$
\gamma =\frac{i}{2}(\psi^{1\bar1}+\psi^{2\bar2}+\psi^{3\bar3})
$$
The other Bott-Chern cohomology groups vanish. In particular, $H^{1,0}_{BC}(M)=0$.
By the above expressions, it follows at once that 
$$
H^{2,2}_A(M)= \span_\C\langle[\psi^{1\bar{1}3\bar3}]_{A},[\psi^{2\bar{2}3\bar3}]_{A} \rangle.
$$
 Let $\omega$ be the 
fundamental form of any Gauduchon metric on $M$. Then, $\omega$ can be expressed in the following way,
\begin{equation}\label{gauduchonM}
2\omega=i(r^2\psi^{1\bar1}+ s^2\psi^{2\bar2}+t^2\psi^{3\bar3})+
u\psi^{1\bar2}-\bar{u}\psi^{2\bar1}+v\psi^{1\bar3}-\bar{v}\psi^{3\bar1}
+w\psi^{2\bar3}-\bar{w}\psi^{3\bar2},
\end{equation}
where $r$, $s$, $t$ are smooth real valued functions on $M$ and $u$, $v$, $w$ are complex valued smooth functions on $M$, satisfying the following conditions
\begin{equation}\label{positivity}
r^2>0,\quad
r^2s^2-\vert u\vert^2>0,\quad
r^2s^2t^2-2\hbox{\rm Re}(iu\bar{v}w)>r^2\vert w\vert^2+s^2\vert v\vert^2+t^2\vert u\vert^2
\end{equation}
A straigthforward computation gives
$$
\begin{array}{lll}
\omega^2&=&-\frac12(r^2s^2\psi^{1\bar12\bar2}+r^2t^2\psi^{1\bar13\bar3}+s^2t^2\psi^{2\bar23\bar3})+\\[5pt]
&{}& +\frac{i}{2}(r^2w\psi^{1\bar12\bar3}-r^2\bar{w}\psi^{1\bar13\bar2}+
s^2v\psi^{2\bar21\bar3}-s^2\bar{v}\psi^{2\bar23\bar1})+\\[5pt]
&{}& +\frac{i}{2}(t^2u\psi^{3\bar31\bar2}-t^2\bar{u}\psi^{3\bar32\bar1}).
\end{array}
$$
Therefore,
$$
\la\omega^2,\psi^{2\bar23\bar3}\ra=\int_M\langle\omega^2,\psi^{2\bar23\bar3}\rangle\vol=
-\frac{1}{2}\int_Ms^2t^2\vol <0.
$$
Since $\psi^{2\bar23\bar3}$ is Aeppli harmonic the last computation shows that $\omega^2$ is not $L^2$-orthogonal to the space of Aeppli harmonic $(2,2)$-forms, so that $0\neq[\omega^2]_{A}\in H^{2,2}_A(M)$.  \qed
}
\end{example}

Reminding that on compact complex surfaces any Gauduchon metric gives rise to a non-zero class in Aeppli cohomology, we give an explicit computation of the Aeppli cohomology class of the fundamental form of Gauduchon metrics being non-zero, while the Bott-Chern cohomology of $(1,0)$-forms is zero, in some specific compact complex surfaces.

\begin{example}\label{ex-sec-kod}{\rm 
Let $M=\Gamma\backslash G$ be a Secondary Kodaira surface. $M$ is a compact complex surface which is diffeomorphic to the solvmanifold $G/\Gamma$, i.e., $G$ is a simply connected solvable Lie group and $\Gamma$ is a closed subgroup of $G$. See \cite{ADT} and \cite{AS} for a more detailed survey of the surface. Denote by $\phi^1,\phi^2$ a $G$-left-invariant coframe of the holomorphic tangent bundle $T^{1,0}M$ with structure equations
\begin{equation}
\label{struc-seck}
d\phi^1=-\frac12\phi^{12}+\frac12\phi^{1\bar{2}},\ \ \ \ d\phi^2=\frac{i}{2}\phi^{1\bar{1}}.
\end{equation}
Following \cite{ADT}, we know that $H^{1,0}_{BC}(M)=0$ and $H^{1,1}_{BC}(M)=\span_\C\langle[\phi^{1\bar{1}}]_{BC}\rangle$. Let $g$ be a Hermitian metric whose fundamental form is
$$
\gamma =\frac{i}{2}(\phi^{1\bar1}+\phi^{2\bar2}).
$$
By equations (\ref{struc-seck}), the $(1,1)$-form $\phi^{1\bar{1}}$ is $BC$-harmonic, then $*\phi^{1\bar{1}}=-\phi^{2\bar{2}}$ is Aeppli-harmonic and $H^{1,1}_{A}(M)=\span_\C\langle[\phi^{2\bar{2}}]_{A}\rangle$. Now, let $\omega$ be the fundamental form of any Gauduchon metric on $M$. In general, $\omega$ can be written as
\begin{equation*}
2\omega=i(A^2\phi^{1\bar{1}}+C^2\phi^{2\bar{2}})+B\phi^{1\bar{2}}-\overline{B}\phi^{2\bar{1}},
\end{equation*}
where $A,B,C$ are \Cinf functions, $A^2>0$, and $A^2C^2-|B|^2>0$. By Hodge decomposition for Aeppli cohomology, $\omega$ can be decomposed as
\begin{equation*}
\omega=E\phi^{2\bar{2}}+\del\mu+\delbar\lambda,
\end{equation*}
where $E\in\C$. If $E\ne0$, then $0\ne[\omega]_{A}\in H^{1,1}_A(M)$, as claimed. By contradiction, assume that $E=0$. Then
\begin{equation*}
\la\omega,\phi^{2\bar{2}}\ra=\la\del\mu+\delbar\lambda,\phi^{2\bar{2}}\ra=0.
\end{equation*}
On the other hand,
\begin{equation*}
\la\omega,\phi^{2\bar{2}}\ra=\frac{i}2\la C^2\phi^{2\bar{2}},\phi^{2\bar{2}}\ra=\frac{i}2\lVert C\rVert_{L^2}^2\ne0.
\end{equation*}
Summing up, $H^{1,0}_{BC}(M)=0$ but $0\ne[\omega]_{A}\in H^{1,1}_A(M)$.\qed
}
\end{example}

Note that the same computations made in Example \ref{ex-sec-kod} for the Secondary Kodaira surface still apply if we consider the Inoue surfaces $\mathcal{S}_M$ and $\mathcal{S}^{\pm}$. See \cite{ADT} and \cite{AS} for a description, for the stucture equations and for the computation of the Bott-Chern cohomology of these surfaces. 

Finally, we give two other applications of the main result. 

Let $M=\Gamma\backslash G$ be a compact nilmanifold of dimension $2n$, that is a compact quotient of a simply connected nilpotent $2n$-dimensional Lie group $G$ by a uniform discrete subgroup $\Gamma$ endowed with a left-invariant complex structure. Let $\mathfrak{g}$ be the Lie algebra of $G$ and denote by $\mathfrak{g}_\C$ the complexification of $\mathfrak{g}$. Then, according to \cite[Theorem 1.3]{Sa}, there exists a basis of left-invariant $(1,0)$-forms $\{\varphi^1,\ldots,\varphi^n\}$ on $M$, indeed 
$\{\varphi^1,\ldots,\varphi^n\}$ is a basis of $\mathfrak{g}_\C^*$, such that 
$$
d\varphi^1=0,\qquad d\varphi^i\in\mathcal{I}(\varphi^1,\ldots,\varphi^{i-1}),\quad i=2,\ldots ,n,
$$
where $\mathcal{I}(\varphi^1,\ldots,\varphi^{i-1})$ is the ideal in $\Lambda^*\mathfrak{g}_\C^*$ spanned 
by $\{\varphi^1,\ldots,\varphi^{i-1}\}$.
Therefore, we immediately get that $\span_\C\langle \varphi^1\rangle \subset H^{1,0}_{BC}(M)$, that is 
$H^{1,0}_{BC}(M)\neq 0$; consequently we derive the following.
\begin{proposition}\label{prop1}
Let $M=\Gamma\backslash G$ be a compact nilmanifold endowed with a left-invariant complex structure. Then every Gauduchon metric $\omega$ on $M$ is such that $0\ne[\omega^{n-1,n-1}]_{A}\in H^{n-1,n-1}_A(M)$.
\end{proposition}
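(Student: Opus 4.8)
The plan is to read Proposition \ref{prop1} as the contrapositive of Theorem \ref{gau} (for $n\ge 3$), and more uniformly of Theorem \ref{vanishing} (for all $n\ge 2$), once we know that a compact nilmanifold with left-invariant complex structure satisfies $H^{1,0}_{BC}(M)\neq 0$. That last fact is essentially already recorded in the paragraph preceding the proposition: by \cite[Theorem 1.3]{Sa} one can choose a left-invariant coframe $\{\varphi^1,\dots,\varphi^n\}$ of $(1,0)$-forms with $d\varphi^1=0$, so that $\del\varphi^1=0$ and $\delbar\varphi^1=0$ (the two bidegree components of $d\varphi^1$ vanish separately); since $\varphi^1$ is a nonzero element of $\mathfrak{g}_\C^*$ it descends to a nonzero $(1,0)$-form on the compact manifold $M$, and since $\im(\del\delbar)\cap\Omega^{1,0}(M)=0$, its Bott-Chern class $[\varphi^1]_{BC}\in H^{1,0}_{BC}(M)$ is nonzero.

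Concretely I would argue by contradiction. Suppose some Gauduchon metric $\omega$ on $M$ has $0=[\omega^{n-1}]_A\in H^{n-1,n-1}_A(M)$. Then $\omega^{n-1}=\del\mu+\delbar\lambda$ for smooth forms $\mu,\lambda$ on $M$, which are bounded because $M$ is compact; hence $\omega^{n-1}$ is $(\del+\delbar)$-bounded, and $(M,g)$ is complete. Applying Theorem \ref{vanishing} with $p=1$ (legitimate since $1\le 1\le n-1$ for $n\ge 2$), every $(1,0)$-form $\psi\in L^2(M)$ with $\del\psi=\delbar\psi=0$ must vanish. But $\varphi^1$ is exactly such a form --- it is smooth, hence $L^2$ on the compact $M$, and $d$-closed --- and $\varphi^1\neq 0$, a contradiction. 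Therefore no Gauduchon metric on $M$ can have $[\omega^{n-1,n-1}]_A=0$, which is the assertion.

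I do not expect a genuine obstacle here: all of the analytic content sits inside Theorem \ref{vanishing}, and what remains is only to verify its (mild) hypotheses on a compact nilmanifold and to exhibit the nonzero class $[\varphi^1]_{BC}$. The one point worth a comment is the range of $n$: for $n\ge 3$ one may equally well quote Theorem \ref{cor-cpt} (or \ref{gau}) directly, while for $n=2$ the statement is the classical fact that on a compact complex surface every Gauduchon metric gives a nonzero Aeppli class (see, e.g., \cite[Prop.\ 37]{HL}); passing through Theorem \ref{vanishing} is convenient precisely because it handles all $n\ge 2$ at once.
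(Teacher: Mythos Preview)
Your proposal is correct and follows essentially the same route as the paper: use Salamon's coframe to exhibit a nonzero $d$-closed left-invariant $(1,0)$-form, hence $H^{1,0}_{BC}(M)\neq 0$, and conclude by the contrapositive of the main vanishing theorem. The only cosmetic difference is that you route the contradiction through Theorem \ref{vanishing} to treat all $n\ge 2$ uniformly, whereas the paper tacitly invokes Theorem \ref{cor-cpt}/\ref{gau} (so $n\ge 3$) and relies on the classical surface fact for $n=2$.
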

Let $(M,J,g,\omega)$ be a Hermitian manifold of complex dimension $n$. The Hermitian metric 
$g$ is said to be {\em Strong K\"ahler with Torsion} or, shortly, {\em SKT}, if
$\partial \overline \partial \omega =0$ or, equivalently, $d d^c \omega =0$.
Hermitian SKT metrics have been studied by many
authors and they have also applications in type II string theory
and in $2$-dimensional supersymmetric $\sigma$-models
\cite{GHR,Str,IP}. Moreover, they have also relations with generalized
K\"ahler geometry (see for instance
\cite{GHR,Gu,Hi2}).
In the terminology by Streets and Tian, SKT metrics are called {\em pluriclosed metrics}. In \cite{ST}, a curvature evolution equation on compact complex manifolds endowed with pluriclosed metrics is introduced and studied.

From \cite[Theorem 1.2]{FPS}, we deduce the following application of our main result.
\begin{proposition}\label{prop2}
Let $M = \Gamma\backslash G$ be a nilmanifold of real dimension $6$ with an invariant complex structure $J$. Then every $SKT$ metric $\omega$ on $M$ is such that $0\ne[\omega]_{A}\in H^{1,1}_A(M)$.
\end{proposition}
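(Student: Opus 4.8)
The plan is to reduce Proposition \ref{prop2} to Theorem \ref{cor-cpt} (equivalently Theorem \ref{gau}), so the only real content is to verify that the hypotheses of that theorem are met for a $6$-dimensional nilmanifold $M=\Gamma\backslash G$ with an invariant complex structure $J$ carrying an $SKT$ metric $\omega$. Since $\dim_{\mathbb C}M=3\ge3$, Theorem \ref{cor-cpt} with $p=1$, $n=3$ says: if $0=[\omega^{2}]_A\in H^{2,2}_A(M)$, then $H^{1,0}_{BC}(M)=0$. An $SKT$ metric satisfies $\partial\delbar\omega=0$, hence trivially $\partial\delbar\omega^{2}=2\,\partial\delbar\omega\wedge\omega=0$ (using $\partial\delbar\omega=0$ and that $\omega$ is $\partial$- and $\delbar$-closed up to the relevant degree — more carefully, $\partial\delbar(\omega\wedge\omega)=\partial(\delbar\omega\wedge\omega-\omega\wedge\delbar\omega)$ and one expands, but in any case $SKT$ in dimension $3$ gives $\partial\delbar\omega^2=0$), so $[\omega^{2}]_A$ is a well-defined class in $H^{2,2}_A(M)$. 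Thus everything hinges on showing this class is nonzero precisely when $H^{1,0}_{BC}(M)\ne0$ fails to vanish, and then invoking that on a nilmanifold $H^{1,0}_{BC}(M)\ne0$.

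The key observation is the one already recorded just before Proposition \ref{prop1}: by Salamon's theorem \cite[Theorem 1.3]{Sa}, any invariant complex structure on a nilmanifold admits a coframe of invariant $(1,0)$-forms $\{\varphi^1,\dots,\varphi^n\}$ with $d\varphi^1=0$. Hence $\varphi^1$ is a $d$-closed (so $\partial$- and $\delbar$-closed) $(1,0)$-form which is not $\partial\delbar$-exact (it is not even exact, being a nonzero invariant $1$-form on a nilmanifold, whose first Betti number is positive), so $0\ne[\varphi^1]_{BC}\in H^{1,0}_{BC}(M)$; in particular $H^{1,0}_{BC}(M)\ne0$. Now apply the contrapositive of Theorem \ref{cor-cpt} (with $n=3$, $p=1$): since $H^{1,0}_{BC}(M)\ne0$, we must have $0\ne[\omega^{2}]_A\in H^{2,2}_A(M)$. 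To conclude the statement as phrased — namely $0\ne[\omega]_A\in H^{1,1}_A(M)$ — one uses that the Lefschetz-type map $L^{n-2}=L$, wedging with $\omega$, together with the fact that $*\colon H^{1,1}_A(M)\to H^{2,2}_{BC}(M)$ (or the pairing between Aeppli and Bott-Chern cohomology) is nondegenerate, forces $[\omega]_A\ne0$: if $[\omega]_A=0$, i.e. $\omega=\partial\mu+\delbar\lambda$, then $\omega^{2}=\omega\wedge(\partial\mu+\delbar\lambda)=\partial(\mu\wedge\omega)+\delbar(\lambda\wedge\omega)$ using $\partial\omega,\delbar\omega$ have no effect modulo the Aeppli relations here — more precisely $\partial(\mu\wedge\omega)=\partial\mu\wedge\omega-\mu\wedge\partial\omega$ and similarly for $\delbar$, and combining gives $\omega^2=\partial(\cdots)+\delbar(\cdots)+(\text{terms in }\partial\omega,\delbar\omega)$; one checks with $\partial\delbar\omega=0$ that the correction terms are also in $\im\partial+\im\delbar$, hence $[\omega^2]_A=0$, a contradiction. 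So $[\omega]_A\ne0$.

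In carrying this out, the step that needs the most care is the reduction $[\omega]_A=0\Rightarrow[\omega^{2}]_A=0$ — i.e. that the Lefschetz operator descends to a (here injective enough) map on Aeppli cohomology in this specific low-dimensional situation. The cleanest route, and the one I would actually write, is to avoid this and instead quote \cite[Theorem 1.2]{FPS} as the paper itself signals (``From \cite[Theorem 1.2]{FPS}, we deduce''): that theorem presumably gives exactly the structural input about $SKT$ metrics on $6$-dimensional nilmanifolds (e.g. a classification or a normal-form statement for the $\partial\delbar$-exact part, or the statement that $[\omega^{n-1}]_A\ne0$ is equivalent to a cohomological condition that is automatic here) needed to bypass the hands-on Lefschetz argument. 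So the proof is: (i) note $n=3\ge3$ and $\omega$ $SKT$ gives a well-defined class $[\omega^{2}]_A$; (ii) by \cite[Theorem 1.3]{Sa}, $d\varphi^1=0$ for a suitable invariant $(1,0)$-coframe, hence $0\ne[\varphi^1]_{BC}\in H^{1,0}_{BC}(M)$; (iii) by the contrapositive of Theorem \ref{cor-cpt}, $0\ne[\omega^{2}]_A\in H^{2,2}_A(M)$; (iv) using \cite[Theorem 1.2]{FPS} (or the Lefschetz/duality argument above), deduce $0\ne[\omega]_A\in H^{1,1}_A(M)$. The main obstacle, as noted, is justifying step (iv) cleanly, which is precisely why the reference to \cite{FPS} is invoked.
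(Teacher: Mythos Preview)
Your approach has a genuine gap at step (iv), and it stems from applying Theorem~\ref{cor-cpt} with the wrong value of $p$. You take $p=1$, which yields $[\omega^{2}]_A\ne0$, and are then forced to argue that $[\omega]_A=0\Rightarrow[\omega^2]_A=0$; you never actually establish this implication (the ``correction terms'' $\mu\wedge\partial\omega$ and $\lambda\wedge\delbar\omega$ are not shown to lie in $\im\partial+\im\delbar$, and the SKT condition does not obviously help). Your side remark that SKT in dimension $3$ gives $\partial\delbar\omega^2=0$ is also unjustified: one has $\partial\delbar\omega^2=2\partial\delbar\omega\wedge\omega-2\delbar\omega\wedge\partial\omega$, and the second term need not vanish.

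The paper instead applies Theorem~\ref{cor-cpt} with $p=2$ (so $\omega^{n-p}=\omega$, and the conclusion is directly about $[\omega]_A$). For this one needs $H^{2,0}_{BC}(M)\ne0$, and \emph{that} is exactly what \cite[Theorem~1.2]{FPS} supplies: it gives a basis of invariant $(1,0)$-forms with $d\alpha^1=0$ and $d\alpha^2=0$, so $\alpha^1\wedge\alpha^2$ is a nonzero $d$-closed $(2,0)$-form and $H^{2,0}_{BC}(M)\ne0$. Salamon's theorem alone only guarantees one closed $(1,0)$-form, hence only $H^{1,0}_{BC}(M)\ne0$, which is not enough. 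In short, you misidentified the role of \cite{FPS}: it is not there to rescue your step (iv), but to provide the stronger cohomological input $H^{2,0}_{BC}\ne0$ that makes the $p=2$ case of Theorem~\ref{cor-cpt} apply directly and renders step (iv) unnecessary.
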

\begin{proof}
By \cite{FPS}, the $SKT$ condition is satisfied by either all invariant Hermitian metrics $\omega$ on $M$ or by none. Indeed, it is satisfied if and only if $J$ has a basis $(\alpha^i)$ of $(1,0)$-forms such that
\begin{equation}
\label{skt-nil}
\begin{split}
d\alpha^1=0,\ \ \ 
d\alpha^2=0,\ \ \ 
d\alpha^3=A\alpha^{\bar12}+B\alpha^{\bar22}+C\alpha^{1\bar1}+D\alpha^{1\bar2}+E\alpha^{12},
\end{split}
\end{equation}
where $A,B,C,D,E$ are complex numbers such that
\begin{equation*}
|A|^2+|D|^2+|E|^2+2\real(\overline{B}C)=0.
\end{equation*}
Therefore, if $\omega$ is a $SKT$ metric on $M$, then by structure equations (\ref{skt-nil}) we get $\span_\C\langle \alpha^1\wedge\alpha^2\rangle \subset H^{2,0}_{BC}(M)$, that is $H^{2,0}_{BC}(M)\ne0$. Thus by Theorem \ref{cor-cpt} we derive $0\ne[\omega]_{A}\in H^{1,1}_A(M)$.
\end{proof}


\begin{thebibliography}{20}



\bibitem{ADT} D. Angella, G. Dloussky, A. Tomassini, On Bott-Chern cohomology of compact complex surfaces, {\em Ann. Mat. Pura Appl.} (4) {\bf 195} (2016), no. 1, 199--217. 
\bibitem{AS} D. Angella, T. Sferruzza, Geometric formalities along the Chern-Ricci flow, {\tt preprint} 	arXiv:1906.01424 [math.DG]
\bibitem{DE} J.P. Demailly, {\em Complex Analytic and Differential Geometry}, Université de Grenoble, Saint-Martin d’Hères, 2012. 
\bibitem{FPS} A. Fino, M. Parton, S. Salamon, Families of strong KT structures in six dimensions, {\em Comment. Math. Helv.} {\bf 79} (2004), no. 2, 317--340. 
\bibitem{GHR} S.J. Gates, C.M. Hull and M. Ro\v cek, Twisted multiplets and new
supersymmetric nonlinear sigma models, {\em Nuc. Phys. B} \textbf{248} (1984), 157--186.
\bibitem{Ga} P. Gauduchon, Le théorème de l'excentricité nulle. (French) C. R. Acad. Sci. Paris Sér. A-B 285 (1977), no. 5, A387--A390.
\bibitem{G} M. Gromov, K\"ahler Hyperbolicity and $L_2$-Hodge Theory, {\em J. Differential Geom.} {\bf 33} (1991), 263--292.
\bibitem{Gu} M. Gualtieri, Generalized complex geometry, {Ann. of Math.} \textbf{174} (2011), 75--123.
\bibitem{HL} R. Harvey, H.B. Lawson, An intrinsic characterization of Kähler manifolds, {\em Invent. Math.} {\bf 74} (1983), no. 2, 169–198. 
\bibitem{HT} R.K. Hind, A. Tomassini, On $L_2$-cohomology of almost Hermitian manifolds, {\tt arXiv:1708.06316}, to appear in the {\em J. Sympl. Geom.}
\bibitem{Hi2} N.J. Hitchin, Instantons and generalized K\"ahler geometry, {\em Comm. Math. Phys.}
\textbf{265} (2006), 131--164.
\bibitem{H} D. Huybrechts, {\em Complex Geometry. An Introduction}, Springer, Berlin 2005. 
\bibitem{IP} S. Ivanov and G. Papadopoulos, Vanishing theorems and string backgrounds,
{\em Classical Quantum Gravity} \textbf{18} (2001), 1089--1110.
\bibitem{KS} K. Kodaira, D.C. Spencer, On deformations of complex analytic structures, III. Stability theorems for complex structures, {\em Ann. of Math.} {\bf 71} (1960), 43--76.
\bibitem{OUV} A. Otal, L. Ugarte, R. Villacampa, Invariant solutions to the Strominger system and the heterotic equations of motion, {\em Nuclear Phys.}, B 920 (2017), 442--474. 
\bibitem{PT1} R. Piovani, A. Tomassini, Bott-Chern harmonic forms on Stein manifolds, {\em Proc. Amer. Math. Soc.}, {\bf 147} (2019), 1551--1564.
\bibitem{PT2} R. Piovani, A. Tomassini, Bott-Chern harmonic forms on complete Hermitian manifolds, {\em Internat. J. Math.},{\bf 30} (2019), no. 5.
\bibitem{Sa} S. Salamon, Complex structures on nilpotent Lie algebras, {\em J. Pure Appl. Algebra} {\bf 157} (2001), 311--333.
\bibitem{S} M. Schweitzer, Autour de la cohomologie de Bott-Chern, {\tt preprint} arXiv:0709.3528v1[math. AG]
\bibitem{ST} J. Streets, G. Tian, A Parabolic Flow of Pluriclosed Metrics, {\em Internat. Math, Res. Notices} \textbf{2010}, (2010), 3101--3133. 
\bibitem{Str} A. Strominger, Superstrings with torsion, {\em Nuclear Phys. B}
\textbf{274} (1986), 253--284.
\bibitem{TT} N. Tardini, A. Tomassini, On geometric Bott-Chern formality and deformations, {\em Ann. Mat. Pura Appl.} (4) {\bf 196} (2017), 349--362. 
\bibitem{YZZ} S.T. Yau, Q. Zhao, F. Zheng, On Strominger Kähler-like manifolds with degenerate torsion, {\tt preprint} arXiv:1908.05322 [math.DG].
\bibitem{W} H.C. Wang, Complex parallisable manifolds, {\em Proc. Amer. Math. Soc.} 5 (1954), 771--776. 





\end{thebibliography}
\end{document}